\documentclass[11pt]{amsart}

\makeatletter
\def\emailaddrname\@ifnotempty#1#2\/:\space{}
\makeatother

\newtheorem{theorem}{Theorem}
\newtheorem{lemma}{Lemma}

\title{A note on the Slater--V{\'e}lez permutation}

\author{Andreas Floros}
\address{Imperial College London}
\email{andreas.floros18@imperial.ac.uk}

\begin{document}

\begin{abstract}
    We study the greedy sequence of positive integers introduced by Slater and V{\'e}lez. They proved that it is a permutation of the positive integers but left open whether its sequence of consecutive absolute differences is also a permutation. We prove that it is.
\end{abstract}

\maketitle

Let $\mathbb{N}$ denote the positive integers. Starting with $a_1=1$ and $a_2=2$, continue in a greedy way, choosing $a_{n+1}$ to be the least unused positive integer for which $|a_{n+1}-a_n|$ differs from all earlier consecutive absolute differences.

Slater and V{\'e}lez \cite{SlaterVelez1977} proved $(a_n)_{n\ge1}$ is a permutation of $\mathbb{N}$ and asked whether its consecutive absolute differences also exhaust $\mathbb{N}$. We show that this is true.
\begin{theorem}\label{thm:slater-velez}
The sequence $(|a_{n+1}-a_n|)_{n\ge1}$ is a permutation of $\mathbb{N}$.
\end{theorem}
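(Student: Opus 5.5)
Write $d_n=|a_{n+1}-a_n|$. By construction the $d_n$ are pairwise distinct, so the content of Theorem~\ref{thm:slater-velez} is surjectivity: every $d\in\mathbb{N}$ occurs as some $d_n$. I would argue by contradiction. Suppose some $d$ never occurs, and fix the least such $d$. Then there is an index $N$ such that every difference smaller than $d$ has already appeared among $d_1,\dots,d_{N}$.

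The point of fixing $d$ is that from step $N$ on, every new difference is at least $d$, yet $d$ itself is never chosen. At every step $n>N$ the candidates $a_n\pm d$ are therefore rejected. Since the difference $d$ is still unused, the only possible reason is that both are already used (or $a_n-d\le 0$). So for all $n>N$ we get
\[
a_n+d\in\{a_1,\dots,a_n\},
\]
and $a_n-d$ is used or nonpositive.

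The first step is to exploit this closure property. The set $U_n=\{a_1,\dots,a_n\}$ contains $a_n+d$ for every $n>N$. Let $m_n$ be the least positive integer not in $U_n$. Slater--V\'elez showed that every integer eventually appears, so $m_n\to\infty$. I would show that this is incompatible with the behaviour of the maximum. Each new term $a_{n+1}$ lies within $d$ of some earlier value, in the sense that $a_{n+1}-d$ must also eventually be present. Combining the two facts, the values $a_n$ for $n>N$ fall into residue classes mod $d$ in which they appear in decreasing runs: whenever $x$ is placed after time $N$, the value $x+d$ was already placed.

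Iterating this, for any $x$ placed after time $N$ the whole chain $x+d,x+2d,\dots$ must be placed before $x$. That is impossible unless these chains are finite, and the only way they can be finite is if they are bounded by $\max U_N$. So every value placed after time $N$ is at most $\max U_N$, contradicting the fact that $(a_n)$ is a permutation of $\mathbb{N}$.

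Let me check the key step more carefully. If $x=a_n$ with $n>N$, then $x+d\in U_n$, i.e.\ $x+d=a_k$ for some $k<n$. If $k>N$ as well, apply the same fact to $a_k$, obtaining $x+2d=a_j$ with $j<k$, and so on. The indices strictly decrease, so the chain must eventually reach an index $\le N$. Hence $x+td\in U_N$ for some $t\ge1$, and therefore $x<\max U_N$. So all terms after time $N$ are bounded by $\max U_N$, but infinitely many distinct terms cannot be bounded. This contradiction proves surjectivity, and with it the theorem.

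The main obstacle is justifying that $a_n+d$ is rejected only because it is used. It is a legal candidate, since $d$ is unused and $a_n+d>0$. So if it were unused, the greedy rule would select some value $\le a_n+d$. The catch is that the rule might select a smaller value with a different, unused difference, which does not violate anything. The argument therefore needs more: the greedy choice $a_{n+1}$ is the least admissible value. I would refine the argument using the fact that the least unused integer $m_n$ tends to infinity and that $a_{n+1}\le a_n+d$ whenever $a_n+d$ is free. Whenever $a_n+d$ is unused, the step is then a move to a value at most $a_n+d$. So I would track the potential $a_n$ and show that $a_n$ stays bounded eventually, unless $a_n+d$ is used infinitely often in a way that forces the descending-chain contradiction above. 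Making this dichotomy rigorous is the hardest part, and I expect it to require a careful analysis of the positions of the small unused values.
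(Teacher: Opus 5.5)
There is a genuine gap, and it is the one you flag at the end. Your key step uses the inference that because $d$ is unused, $a_n\pm d$ can only have been rejected for being already used. That inference is invalid. The greedy rule takes the \emph{least} legal value, and a free $a_n+d$ is routinely bypassed for a smaller value with a larger, new difference. In the actual sequence $1,2,4,7,3,8,14,5,\dots$, at $a_7=14$ the differences $1,\dots,6$ are used and $7$ is not. The value $21=14+7$ is free, yet $a_8=5$, with difference $9$.

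For $n>N$, what is true is a dichotomy: either $a_n+d\in U_n$, or the step from $a_n$ is a descent to a value below $a_n-d$. So $a_n+d\in U_n$ is guaranteed only at ascent steps. There $a_{n+1}>a_n+d$, and a free $a_n+d$ would undercut it.

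With this corrected statement your descending chain does not iterate. Suppose $x$ is an ascent vertex and $x+d$ was placed earlier. When $x+d$ was placed, $x$ was still free and at distance $d$, so the greedy successor of $x+d$ is smaller than $x$. Thus $x+d$ is always a descent vertex, and nothing forces $x+2d$ to have been used. The chain stops after one step, so no bound on later terms follows. Your suggested repair by tracking $a_n$ falls back on the same chain, since $a_n$ is certainly not eventually bounded.

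The missing idea is to use this one-step pairing globally rather than iteratively. Write $s(v)$ for the term following $v$, and let $A(X)$ count the ascent vertices $v\le X$, i.e.\ those with $s(v)>v$. The map $v\mapsto v+d$ sends ascent vertices beyond a fixed threshold injectively to descent vertices, which gives $A(X)\le X/2+C$.

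The contradiction then needs a separate lemma, valid for every permutation of $\mathbb{N}$ with distinct consecutive differences: $A(X)-X/2$ is unbounded above. The paper proves it by summation over the vertices up to $X$. Each descent vertex satisfies $v=s(v)+|s(v)-v|$, and both maps are injective, so the $q$ non-ascent vertices up to $X$ sum to at least about $q(q+1)$. The $p$ ascent vertices, spaced at least two apart on average under the assumed bound, sum to at least about $p(p+1)$. Since $p+q=X$, the total exceeds $X(X+1)/2$ by order $X$, which is impossible. Nothing in your proposal plays the role of this counting lemma, and that is what the argument actually needs.
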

For any permutation of $\mathbb{N}$, let $s(v)$ be the term immediately following $v$. Call $v$ an \emph{ascent vertex} if $s(v)>v$ and a \emph{descent vertex} if $s(v)<v$. Define
\[
 d(v)=|s(v)-v|,\qquad A(X)=\#\{v\le X:s(v)>v\}.
\]
We highlight some basic facts. Here, the successor map, $s$, is injective since each vertex has at most one predecessor. If consecutive absolute differences are distinct, $d$ is also injective. Moreover, there are infinitely many ascent vertices, otherwise the permutation would eventually be strictly decreasing.

In the greedy permutation of interest, we show that a missing difference pairs every sufficiently large ascent vertex with a descent a fixed distance above it. This forces $A(X)\le X/2+C$, for a fixed $C$, contrary to the following.
\begin{lemma}\label{lem:ascent-count}
For any permutation of $\mathbb{N}$ with distinct consecutive absolute differences,
$A(X)-X/2$ is unbounded above.
\end{lemma}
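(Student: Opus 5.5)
The plan is to argue by contradiction. Assume $A(X)\le X/2+C$ for all $X$, and compare two estimates for the total length of the edges $v\to s(v)$ near a cut. For $X\ge1$, write $D(X)=X-A(X)$ for the number of descent vertices $\le X$. An \emph{up-crossing} of $X$ is an ascent $v\le X$ with $s(v)>X$. A \emph{down-crossing} is a descent $v>X$ with $s(v)\le X$. Let $U(X)$ and $W(X)$ count these.

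The first step is the topological count $U(X)=W(X)+1$. The walk starts at $1\le X$. It visits every element of $[1,X]$, and after its last visit there it stays above $X$ forever. So it crosses the cut once more upward than downward. Summing the number of edges crossing each half-integer cut gives
\[
\sum_{X=1}^{N}\bigl(U(X)+W(X)\bigr)=\sum_{X=1}^{N}\bigl(2W(X)+1\bigr)
\]
on one side. On the other side this sum is essentially $\sum_e \min(\text{length of } e,\ \text{overlap with } [1,N])$ over all edges $e$.

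The second step uses injectivity of $d$. Each of the differences $1,2,\dots,M$ occurs exactly once. This gives a lower bound of order $N^2$ on the total overlap of edges with $[1,N]$. The reason is that the edges touching $[1,N]$ number at most about $2N$, and their lengths are distinct, so their total overlap is at least of order $N^2$ unless many of them are long edges leaving the window. I intend to organise this as $\sum_{v\le N}d(v)\ge N(N+1)/2$. Each descent $v\le N$ contributes at most $v-1$, and the targets of these descents are distinct. Ascents leaving $[1,N]$ contribute only their overlap $N-v$.

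The third step converts the hypothesis into an upper bound. If only about half the vertices are ascents, then the descents, numbering about $X/2$, must land on distinct targets below them. Then $\sum_{\text{desc } v\le X}(v-s(v))$ is at most about $\sum(\text{largest } D \text{ values}) - \sum_{1}^{D}$, which is roughly $X^2/4$. The ascents account for at most a further $\sum_{\text{asc } v\le X}(X-v)+(\text{crossing overlaps})$. I would then combine this with the identity
\[
\sum_{v\le X}(s(v)-v)=\sum_{\text{up targets}}s(v)-\sum_{\text{down targets}}s(v)-1,
\]
which follows from $s([1,X])=\bigl([2,X]\setminus\{\text{down targets}\}\bigr)\cup\{\text{up targets}\}$. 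The aim is to show that the net signed displacement, and hence the average number of crossings, stays too small to accommodate the forced $\ge N^2/2$ total length when $A(X)-X/2$ is bounded.

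The main obstacle is the third step: extracting a genuine contradiction rather than a matching of constants. The naive bounds above give $X^2/4$ against $X^2/8$-type quantities that do not clash by themselves. I expect to need a sharper pairing in which each descent of length $k$ landing at $u$ is matched with the ascent leaving $u$. This uses that every $u\ge2$ has both an in-edge and an out-edge. It should show that having half the vertices be descents forces the differences to be used at a rate slower than $1,2,3,\dots$ permits. If that fails, I would try weighting each vertex by $1/v$ or averaging $A(X)-X/2$ over $X\le N$ to smooth out the crossing terms.
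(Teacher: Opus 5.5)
Your proposal does not yet prove the lemma. The third step, where the contradiction has to come from, is left open, and your own comparison ($\approx X^2/4$ against $\approx X^2/8$) shows that the bounds you have do not conflict. This is not a matter of sharpening constants. In a correct accounting (sketched below) the $X^2$ terms cancel exactly when $p=q=X/2$. The contradiction then comes from a surplus of order $X$ against a bounded error. So you should not expect estimates that lose constant factors at order $X^2$ to close the argument.

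Several features of your setup work against you:
\begin{itemize}
\item The crossing counts do not help. As you set them up, their only link to the hypothesis is $W(X)=U(X)-1\le A(X)-1$.
\item Step 2 targets the wrong quantity. The bound $\sum_{v\le N}d(v)\ge N(N+1)/2$ is true. But the large differences may sit on ascent edges leaving $[1,N]$, so it says nothing about the crossing sum. The ``forced $N^2/2$ total length'' therefore cannot be compared with it.
\item Step 3 uses the hypothesis only as a count $D(X)\approx X/2$ at a single $X$. That is why you can only bound the descent positions by the $D$ largest elements of $[1,X]$.
\end{itemize}

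The missing idea is to compare \emph{positions}, and to use the hypothesis at every $Y\le X$, not just at $Y=X$.
\begin{itemize}
\item Set aside the $C$ smallest ascents as a fixed set $F$. The $i$-th remaining ascent $u_i$ satisfies $C+i=A(u_i)\le u_i/2+C$, i.e.\ $u_i\ge 2i$. So the $p$ of them lying in $[1,X]$ have position sum at least $p(p+1)$.
\item For $X\ge\max F$, the rest of $[1,X]$ is a set $Q$ of $q=X-p$ vertices, consisting of descents together with $F$.
\item For a descent, $v=s(v)+d(v)$. For $v\in F$, $s(v)+d(v)=v+2d(v)$.
\item Both $s$ and $d$ are injective on $Q$; your step 3 uses only the first. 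Hence $\sum_{v\in Q}v+E=\sum_{v\in Q}\bigl(s(v)+d(v)\bigr)\ge q(q+1)$, where $E=2\sum_{v\in F}d(v)$ is independent of $X$.
\end{itemize}
Adding the two bounds and using $\sum_{v\le X}v=X(X+1)/2$ gives $X(X+1)/2+E\ge p(p+1)+q(q+1)\ge X^2/2+X$. Hence $X\le 2E$, which fails for large $X$. No crossing counts are needed.
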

\begin{proof}
Suppose instead that $A(X)\le X/2+C$ for every positive integer $X$, where $C$ is a fixed positive integer. To remove the additive constant, set aside the $C$ smallest ascent vertices. Call this fixed set $F$ and list the other ascent vertices as $u_1<u_2<\dots$. Then, invoking the assumed bound yields
\begin{equation}\label{eq:vertex-bound}
 C+i=A(u_i)\le u_i/2+C\implies u_i\ge2i.
\end{equation}
Fix an integer $X\ge\max F$. Let $P=\{u_i:u_i\le X\}$ and
$Q=\{1,\ldots,X\}\setminus P$. Write $p=|P|$ and $q=|Q|$.
We now estimate the sum in each set. By \eqref{eq:vertex-bound}
\begin{equation}\label{eq:ascent-sum}
 \sum_{v\in P}v\ge\sum_{i=1}^p2i=p(p+1).
\end{equation}
$Q$ consists of descent vertices and the fixed set $F$. For a descent vertex, $v$, we have $s(v)+d(v)=v$. Instead, for $v\in F$ we have $s(v)+d(v)=v+2d(v)$. The extra contribution from $F$ is therefore $E=2\sum_{v\in F}d(v)$, which is independent of $X$. Now, the setup implies that both $s$ and $d$ are injective. Consequently, the images $s(Q)$ and $d(Q)$ each contain $q$ distinct positive integers. Hence we have the lower bound
\begin{equation}\label{eq:complement-sum}
 \sum_{v\in Q}v+E
 =\sum_{v\in Q}s(v)+\sum_{v\in Q}d(v)
 \ge2\sum_{i=1}^qi=q(q+1).
\end{equation}
Adding \eqref{eq:ascent-sum}, \eqref{eq:complement-sum}, using
$p+q=X$ and $p^2+q^2\ge X^2/2$ gives
\[
 \frac{X(X+1)}2+E
 \ge p(p+1)+q(q+1)
 \ge\frac{X(X+2)}2.
\]
Thus $X\le2E$ for arbitrarily large $X$, a contradiction.
\end{proof}

\begin{proof}[Proof of Theorem~\ref{thm:slater-velez}]
Toward a contradiction, assume that a positive integer, $h$, never occurs as a consecutive absolute difference of $(a_n)_{n\ge1}$.
Since $d$ is injective by construction, only finitely many vertices satisfy $d(v)\le h$. Fix an integer $L>1$ beyond them, so that we have $d(v)>h$ whenever $v\ge L$.

Let $v\ge L$ be an ascent vertex. Then $s(v)>v+h$. When the sequence reaches
$v$, the value $v+h$ already appeared. Otherwise it would be an unused value smaller than the chosen successor and with unused difference $h$.

At that earlier occurrence of $v+h$, the value $v$ was still unused and was
a legal choice. The greedy rule then implies
\[
 s(v+h)\le v < v + h.
\]
That is, $v+h$ is a descent vertex.

Based on the above observations, for $X\ge L$, the ascent vertices in $[L,X]$ and their translates by $h$
form two disjoint sets in $[L,X+h]$. Each set has $A(X)-A(L-1)$ members. Therefore
\[
 2\bigl(A(X)-A(L-1)\bigr)\le X-L+h+1.
\]
This shows $A(X)-X/2$ is bounded above, contradicting
Lemma~\ref{lem:ascent-count}.
\end{proof}

\section*{Acknowledgments}
The author thanks Neil J. A. Sloane for taking an interest in this note.

\bibliographystyle{amsplain}
\bibliography{\jobname}

\providecommand{\bysame}{\leavevmode\hbox to3em{\hrulefill}\thinspace}
\providecommand{\MR}{\relax\ifhmode\unskip\space\fi MR }
% \MRhref is called by the amsart/book/proc definition of \MR.
\providecommand{\MRhref}[2]{%
  \href{http://www.ams.org/mathscinet-getitem?mr=#1}{#2}
}
\providecommand{\href}[2]{#2}
\begin{thebibliography}{1}

\bibitem{SlaterVelez1977}
P.~J. Slater and W.~Y. V{\'e}lez, \emph{Permutations of the positive integers with restrictions on the sequence of differences}, Pacific Journal of Mathematics \textbf{71} (1977), no.~1, 193--196.

\end{thebibliography}

\end{document}